\let\div\undefined\DeclareMathOperator{\div}{div}
\newcommand{\divG}[0]{\div_{\mathbb{G}}}
\newcommand{\DeltaG}[0]{\Delta_{\mathbb{G}}}
\newcommand{\GammaG}[0]{\Gamma_{\mathbb{G}}}
\newcommand{\DeltaE}[0]{\Delta_{\mathbb{E}}}
\newcommand{\h}[0]{\mathbb{H}^{n}}
\newcommand{\hf}[0]{\mathbb{H}^{1}}
\newcommand{\g}[0]{\mathbb{G}}
\newcommand{\e}[0]{\mathbb{E}}
\newcommand{\ps}[2]{\left<#1,#2\right>}
\newcommand{\BG}[2]{B_{#1}^{\g}(#2)}
\newcommand{\dl}[0]{\delta_{\lambda}}
\newcommand{\dr}[0]{\delta_{r}}
\let\P\undefined 
\newcommand{\P}[0]{\mathcal{P}}
\newcommand{\gradG}[0]{\nabla_{\mathbb{G}}}
\newcommand{\gradE}[0]{\nabla_{\mathbb{E}}}
\newcommand{\nor}[1]{\left\lvert#1\right\rvert}
\newtheorem{theorem}{Theorem}
\numberwithin{theorem}{section}
\newtheorem{prop}[theorem]{Proposition}
\theoremstyle{definition}
\theoremstyle{remark}
\newtheorem{nota}[theorem]{Remark}
\numberwithin{equation}{section}
 \DeclareMathOperator*{\Span}{span}
\begin{document}
\title[\dots Some counterexamples to Alt-Caffarelli-Friedman \dots in Carnot groups] {Some counterexamples to Alt-Caffarelli-Friedman monotonicity formulas in Carnot groups}
\author[]{Fausto Ferrari and Davide Giovagnoli}
\date{\today}
\address{Fausto Ferrari: Dipartimento di Matematica, Università di Bologna, Piazza di Porta \\ S.Donato 5, 40126, Bologna-Italy}
\email{fausto.ferrari@unibo.it}
\address{Davide Giovagnoli: Dipartimento di Matematica, Università di Bologna, Piazza di Porta \\ S.Donato 5, 40126, Bologna-Italy}
\email{d.giovagnoli@unibo.it}
\thanks{F.F. is partially supported by 2024-INDAM-GNAMPA project: {\it Free boundary problems in noncommutative structures and degenerate operators.}}
\thanks{2020 Mathematics Subject Classification: 35R03, 35R35.}
\maketitle

\let\thefootnote\relax
\footnotetext{\textit{Key words and phrases.} Alt-Caffarelli-Friedman monotonicity formula, Carnot groups, two phase free boundary problems.} 

\tableofcontents 
\begin{abstract}
In this paper we continue the analysis of an Alt-Caffarelli-Friedman (ACF) monotonicity formula in Carnot groups of step $s >1$ confirming the existence of counterexamples to the monotone increasing behavior. In particular, we provide a sufficient condition that implies the existence of some counterexamples to the monotone increasing behavior of the ACF formula in Carnot groups. 
 The main tool is based on the lack of orthogonality of harmonic polynomials in Carnot groups. This paper generalizes the results proved in \cite{ferrari2023counterexample}.
\end{abstract} 

\bigskip

\section{Introduction}

The regularity of the solutions of two-phase free boundary problem finds crucial the use of monotonicity formulas, such as the one introduced in \cite{alt1984variational}. In this note, we continue to investigate this subject in the noncommutative setting of Carnot groups, improving the results already obtained in  \cite{ferrari2020some, ferrari2020new, ferrari2023counterexample, ferrari2023alt}. In fact, this paper follows the main stream of the research started in \cite{ferrari2020some} and its aim is to extend some nonexistence results obtained first in \cite{ferrari2023counterexample}, in the framework of the first Heisenberg group $\hf$, to a larger class of Carnot groups. 

 More precisely, in Theorem \ref{t1}, we state a sufficient condition under which
\begin{equation} \label{c1}\Phi(r)=\frac{1}{r^2} \int_{\BG{r}{0}} \nor{\gradG u(M)}^2 \GammaG(M) \, dM 
\end{equation}
satisfies a monotone decreasing behavior in a right neighborhood of $0$, 
for a fixed nice $\g$-harmonic function $u$ such that $u(0)=0$, where $\GammaG$ denotes the fundamental solution of the subLaplacian $\Delta_{\g}$ in a Carnot group $\g$ and $\nabla_{\g}u$ the horizontal intrinsic gradient of $u$. 

 The results have been obtained exploiting the non-commutative features of the Carnot groups and they exhibit a direct method to construct counterexamples to the simplest  Alt-Caffarelli-Friedman formula \eqref{c1} in all the Carnot groups.

In order to get a better overview  about the relevance of this result, we recall the role of such monotonicity formulas in two-phase free boundary problems.

To our knowledge,
the first formalization of the monotonicity formula applied to a two-phase free boundary problem appeared in \cite{alt1984variational}, considering functions  that are minimizers of the energy functional
\begin{equation}\label{Bernoulli}
\mathcal{J}(u) = \int_{\Omega} \nor{\nabla u}^{2} + \lambda_{+} \chi_{ \{u>0 \}} + \lambda_{-} \chi_{\{u\leq 0 \}} \, dx,
\end{equation}
 where $\Omega \subset \mathbb{R}^n$ is an open set endowed with locally Lipschitz boundary and $\lambda_{\pm} >0$ some given positive numbers.
  
  Minima of $\mathcal{J}$ belong to some suitable subsets of the Sobolev space $H^1(\Omega)$, determined assuming particular conditions on $u$, see \cite{alt1984variational}, on which we don't wish to enter here. 
 
 Nevertheless, in this framework, a special role is played by the set $F(u):=\partial\Omega^{+}(u) \cap \Omega$, traditionally named as  {\it the free boundary} of the problem.  

Assuming some further hypotheses, see \cite{alt1984variational} one more time,  the condition that minima satisfy on $F(u)$ may be understood as 
$$
\nor{ \nabla u^+}^2 - \nor{\nabla u^-}^2 = \lambda_{+} - \lambda_{-}.
$$
In addition, minima of $\mathcal{J}$, in the sense of the {\it domain variation}, \cite{Weiss98}, satisfy  the following system studied in \cite{caffarelli1988harnack},
\begin{equation} \label{i1}
 \left\{  \begin{array}{lclclc}
       \Delta u=0&  \qquad \text{in $\Omega^{+}(u):=\{ x \in \Omega : u > 0 \}$},\\
       \Delta u=0&  \qquad \text{in $\Omega^{-}(u):=\{ x \in \Omega : u \leq 0 \}^{o}$},\\
       \nor{ \nabla u^+}^2 - \nor{\nabla u^-}^2 = \lambda_{+} - \lambda_{-}  & \qquad \text{on $ F(u)$,}  
 \end{array}\right.\end{equation}
with viscosity tools, see also \cite{dzhugan2020domain} for a gentle introduction about the domain variation approach. 

Problem \eqref{i1} may be
  read as entailing the Euler-Lagrange equations associate with the minima of the functional \eqref{Bernoulli}.

In this setting, supposing that $0 \in F(u)$,  the monotonicity result, proved in \cite{alt1984variational}, states that for every 
 solution $u \in H^1(\Omega)$  of \eqref{i1}, denoting $ u^+:= \sup \{u,0 \}$ and $u^-:=\sup \{ -u,0 \}$ the positive and negative part of $u$  respectively, the function 
\begin{equation} \label{i2}
    J_{u}(r)= \frac{1}{r^4} \int_{B_{r}(0)} \frac{\nor{\nabla u^+}^2}{\nor{x}^{n-2}}\, dx  \int_{B_{r}(0)} \frac{\nor{\nabla u^-}^2}{\nor{x}^{n-2}}\, dx
\end{equation}
is monotone increasing for all $r \in (0,R),$  for a suitable  $R>0$. 

 Function \eqref{i2}, and its monotonicity properties, has been widely studied in the Euclidean setting and, usually, it is called the Alt-Caffarelli-Friedman monotonicity formula (ACF formula or, simply, monotonicity formula for future references) since, in this framework, it is monotone increasing in a right neighborhood of $0$. 
 
 In addition,  in the same paper \cite{alt1984variational}, it has been shown how \eqref{i2} basically provides bounds to the product of the gradients of $u^+ $ and $u^-$ at the points of the free boundary. Thus, this monotonicity formula allows to deduce the Lipschitz continuity of the global solution $u$ of the problem \eqref{i1}.

  ACF formula has several applications in many different framework, see for instance \cite{caffarelli2005geometric,silva2019recent, allen2021sharp, tortone2022liouville,de2021rectifiability,maiale2021boundary} and  it has had several generalizations like in \cite{caffarelli1988harnack,caffarellimonotonicity,caffarelli1998gradient, caffarelli2002some,conti2005asymptotic,edquist2008parabolic,teixeira2011monotonicity, matevosyan2011almost, velichkov2014note,soave2022anisotropic,garofalo2023note}. 
 
 Hence,  the same questions posed in the Euclidean setting,  concerning the regularity of functions satisfying companion systems like \eqref{i1}, arise when the problem is stated in Carnot groups,  where, as it is well known, it is possibile to study the parallel problem of \eqref{i1} that appears to have the following form, see \cite{dzhugan2020domain},
\begin{equation} \label{c5}
 \left\{  \begin{array}{lclc}
       \Delta_{\g} u=0&  \qquad \text{in $\Omega^{+}(u)$},\\
       \Delta_{\g} u=0&  \qquad \text{in $\Omega^{-}(u)$}, \\
       \lvert \nabla_{\g} u^{+} \rvert^{2} - \lvert \nabla_{\g} u^{-} \rvert^{2}  = 1 & \qquad \text{on $ F(u).$}  
  \end{array}\right.\end{equation}
   In particular, here, $\DeltaG$ and $\gradG$ denote respectively the sub-Laplacian and the horizontal gradient on $\g$, we refer to Section \ref{section2} for precise definitions. 

 As a consequence, to investigate the existence of an {\it intrinsic} companion ACF formula of \eqref{i2}, in the noncommutative framework,  appears, in a sense, natural and useful as well.

The Free Boundary Problem \eqref{c5} suggests that the ACF formula candidate to this two-phase scenario  has to be the following one:
\begin{equation} \label{c4}
 J_{u}^{\mathbb{G}}(r) =\frac{1}{r^4} \int_{B_{r}^{\mathbb{G}}(0)} \lvert \nabla_{\mathbb{G}} u^+(M) \rvert^2 \Gamma_{\mathbb{G}}(M) \, dM \cdot \int_{B_{r}^{\mathbb{G}}(0)} \lvert \nabla_{\mathbb{G}} u^-(M) \rvert^2 \Gamma_{\mathbb{G}}(M) \, dM,\end{equation}
 where $\GammaG$ denotes the fundamental solution of $\Delta_{\g} $, with pole at the origin, and $B_{r}^{\mathbb{G}}(0)$ is the right superlevel set of $\GammaG$.
 
 It is worth to warn here the reader about some delicate points. Instead of considering, in the integrals of \eqref{c1} and \eqref{c4}, the fundamental solution of the sub-Laplacian, some powers of different homogeneous norms might be considered. For instance, instead of the so called $\mathcal{L}-$gauge norm associated with the fundamental solution, it is possible to deal with 
 $d_{CC}^{2-Q}$, where $d_{CC}$   denotes the Carnot-Charath\'eodory distance in the Carnot group and $Q$ is the homogeneous dimension. In addition, instead of considering the horizontal gradient $\nabla_{\mathbb{G}}$ it is possible to study the right horizontal gradients, see for instance \cite{garofalo2023note}. 
 
 Anyhow, in view of the problem \eqref{c5} and  recalling some of the applications associated with the regularity of its solutions as well, we are interested in the properties of the functions defined in \eqref{c1} and \eqref{c4}.

In this direction some results have been already obtained: in \cite{ferrari2020new,ferrari2020some}
the authors discuss about the form of \eqref{c4}  and retrace a part of the fundamental tools in parallel with the Euclidean proofs for the Heisenberg group $\h$. Following an idea recalled in \cite{petrosyan2012regularity}, about classical harmonic functions, through the use of harmonic homogeneous polynomials, in \cite{ferrari2023counterexample} it has been proved an explicit counterexample in $\hf$  to the monotone increasing behavior of the ACF functionals \eqref{c1} and \eqref{c4}. 

More recently, in \cite{ferrari2023alt}, a sufficient condition involving a mean value formula of the norm of the gradient has been established and used to provide another family of counterexamples in $\hf$. 

 We point out as well that, in \cite{garofalo2023note}, the author studied a monotonicity formula with right invariant vector fields in a different perspective of research.


The main result we obtain in this paper is the following one.
\begin{theorem}\label{t2}
    For any Carnot group $\g$ of step $s$, with $s >1$, there exists an intrinsic harmonic function $u$ such that \eqref{c1}  fails to be monotone increasing in a right neighborhood of $0$.
\end{theorem}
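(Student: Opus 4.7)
The plan is to construct an explicit $\g$-harmonic function that violates the sufficient condition of Theorem \ref{t1} in any Carnot group of step $s>1$. I would look for $u$ of the form
\[
u = P_{\alpha} + P_{\beta},
\]
where $P_\alpha,\,P_\beta$ are $\g$-harmonic polynomials that are $\dl$-homogeneous of distinct degrees $\alpha\ne\beta$, and $u(0)=0$. The point is that, by the $\dl$-homogeneity, each term $\int_{\BG{r}{0}} |\gradG P_\gamma|^{2}\GammaG\,dM$ scales as a pure power of $r$, so the square terms contribute only monotone pieces to $\Phi$; the sign and behavior of $\Phi$ near $0$ will therefore be governed by the cross term
\[
\frac{2}{r^2}\int_{\BG{r}{0}} \ps{\gradG P_\alpha(M)}{\gradG P_\beta(M)}\GammaG(M)\,dM.
\]
In the Euclidean setting the analogous cross integral vanishes because harmonic homogeneous polynomials of different degrees are $L^2$-orthogonal on spheres; the novelty exploited by the paper is that this orthogonality breaks down intrinsically in Carnot groups when $s>1$, as already observed in \cite{ferrari2023counterexample}.

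First I would restate the sufficient condition of Theorem \ref{t1} in the simple form it must take on $\dl$-homogeneous building blocks, and rewrite $\Phi(r)$ associated with $u = P_\alpha+P_\beta$ as
\[
\Phi(r)= A\,r^{2(\alpha-1)} + B\,r^{2(\beta-1)} + 2\,C(r),
\]
where $A,\,B\ge 0$ come from the pure terms and $C(r)$ is the cross-term integral above, divided by $r^{2}$. A careful use of the $\dl$-homogeneity combined with the change of variables $M\mapsto \dl M$ shows that $C(r)=c\,r^{\alpha+\beta-2}$ for a constant $c$ depending only on $P_\alpha,\,P_\beta$ and $\g$. Choosing the exponents so that $\alpha+\beta-2<\min\{2(\alpha-1),2(\beta-1)\}$, which amounts to $\alpha\ne\beta$, the cross term dominates near $0$, and if $c<0$ we obtain a neighborhood of $0$ on which $\Phi$ is strictly monotone decreasing. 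Hence everything reduces to exhibiting, in each Carnot group of step $s>1$, a pair $(P_\alpha,P_\beta)$ of $\dl$-homogeneous $\g$-harmonic polynomials with $P_\alpha(0)=P_\beta(0)=0$ and
\[
\int_{\BG{1}{0}} \ps{\gradG P_\alpha}{\gradG P_\beta}\GammaG\,dM \;<\;0.
\]

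The core of the argument, and the step I expect to be the main obstacle, is precisely this universal construction: producing, in every Carnot group $\g$ with $s>1$, harmonic polynomials of the first layer together with at least one harmonic polynomial genuinely using the higher layers (whose existence is guaranteed by $s>1$) whose horizontal gradients interact non-trivially against $\GammaG$. I would proceed by first isolating one horizontal coordinate $x_i$ (a $\g$-harmonic polynomial of $\dl$-degree $1$), and then building a polynomial of higher $\dl$-degree out of monomials in the variables of layers $\ge 2$ and suitable counterterms from the first layer that make $\DeltaG$ vanish; the non-commutativity, encoded in the non-trivial brackets appearing in the horizontal vector fields $X_1,\dots,X_m$, is what forces $\gradG P_\beta$ to have a nonzero first-layer component that pairs non-trivially with $\gradG P_\alpha$. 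After normalizing signs (replacing $P_\beta$ by $-P_\beta$ if needed) the cross integral can be arranged to be negative.

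Finally I would collect the pieces: with such $P_\alpha,P_\beta$ in hand, the function $u=P_\alpha+P_\beta$ is $\g$-harmonic and vanishes at the origin, the asymptotic expansion of $\Phi$ near $0$ is dominated by $2c\,r^{\alpha+\beta-2}$ with $c<0$, and $\Phi$ is strictly decreasing on some interval $(0,R)$. This contradicts monotone increasing behavior and yields Theorem \ref{t2}. As a sanity check, the case $s=1$ (i.e., the Euclidean case) fails precisely because the horizontal gradient is the full gradient and the cross term vanishes by classical spherical orthogonality, consistent with the ACF monotonicity holding there and with the restriction $s>1$ in the statement.
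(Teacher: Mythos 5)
Your overall strategy is the same as the paper's: take $u$ to be a combination of two $\dl$-homogeneous $\g$-harmonic polynomials of different degrees, expand $\Phi(r)$ by homogeneity into pure terms plus a cross term, and reduce everything to producing a pair whose cross integral against $\GammaG$ over $\BG{1}{0}$ has a strict sign (this is exactly Theorem \ref{t1}). However, there are two genuine problems. First, an arithmetic one: the exponent of the cross term, $\alpha+\beta-2$, is the average of the two pure exponents $2(\alpha-1)$ and $2(\beta-1)$, so your requirement $\alpha+\beta-2<\min\{2(\alpha-1),2(\beta-1)\}$ can never hold when $\alpha\neq\beta$; the cross term does not dominate $\Phi$ in general. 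The argument only works if the lower degree is $\alpha=1$, because then the pure term $A\,r^{2(\alpha-1)}=A$ is constant and irrelevant for monotonicity, and the cross term (of order $r^{\beta-1}$) dominates $\Phi'$ against the remaining pure term of order $r^{2(\beta-1)}$. If instead $1<\alpha<\beta$, the positive pure term of order $r^{2(\alpha-1)}$ dominates the derivative near $0$ and $\Phi$ is increasing there, so your ``any $\alpha\neq\beta$'' reduction fails. The paper's choice of degrees $1$ and $3$ is not incidental.

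Second, and more seriously, the step you yourself flag as the main obstacle is left unproved, and the way you propose to close it does not work as stated. Since $\GammaG$ is not explicitly known in a general Carnot group, you cannot compute the cross integral, and ``the horizontal gradients pair non-trivially'' (i.e.\ the pointwise product is not identically zero) does not imply that its integral against $\GammaG$ is nonzero: mixed monomials such as $x_1x_2$ could integrate to zero, or to a value of unknown sign, against the unknown weight. Consequently ``replacing $P_\beta$ by $-P_\beta$ if needed'' only helps once you already know the integral is nonzero, which is precisely what is missing. The paper resolves this by forcing a \emph{pointwise} sign: using the structure of the horizontal fields (Proposition \ref{p1}) it chooses $\P_3$ with free coefficients so that $\DeltaG(\P_1-\P_3)=0$, the mixed term in $\ps{\gradG\P_1}{\gradG\P_3}$ is cancelled, and the product reduces to $p\,x_1^2+q\,x_2^2$ with $(p,q)$ in the closed positive quadrant minus the origin; solvability of the resulting $5\times5$ linear system uses exactly the step-$\geq 2$ condition $[X_1,X_2]\neq 0$ (i.e.\ $\alpha_1^2\neq\alpha_2^1$), and then $a_2>0$ follows for \emph{any} positive weight, no knowledge of $\GammaG$ required. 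Your proposal gestures at non-commutativity producing a first-layer component of $\gradG P_\beta$, but without this cancellation-and-sign mechanism (or some substitute argument showing the integral itself is nonzero) the construction, which is the heart of Theorem \ref{t2}, is not established.
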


 Indeed, in Theorem \ref{t2}, we prove that in every Carnot group an intrinsic harmonic function exists such that
\eqref{c1} is monotone decreasing.

On the other hand, since \eqref{c1} appears also to be 
a factor of \eqref{c4}, this fact may be exploited to prove, as well as in \cite{ferrari2023counterexample} in the Heisenberg group only, that Theorem \ref{t3}, holds. 

More precisely, we can generalize the result of the nonexistence of an ACF formula as \eqref{c4} to all Carnot groups of step two. In fact the following result holds.
\begin{theorem} \label{t0}
     For any Carnot group $\g$ of step $2$, there exists a continuous function $u$ such that $u$ is harmonic in $\{u>0\}$ as well as $u$ is harmonic in $\{u\leq 0\}^o$ and $J_{u}^{\g}$  fails to be increasing in a right neighborhood of $0$.
\end{theorem}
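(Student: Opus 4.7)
The plan is to deduce Theorem \ref{t0} from Theorem \ref{t2} by exploiting a first-layer reflection that is a group automorphism in every step-$2$ Carnot group.

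\textbf{Step 1 (A reflection automorphism in step $2$).} Write a generic point of $\g$ as $g=(x,t)$, with $x$ the first-layer coordinates and $t$ the second-layer ones. Sending $X_i\mapsto -X_i$ and $T_k\mapsto T_k$ at the Lie-algebra level is compatible with the step-$2$ brackets $[X_i,X_j]=\sum_k c_{ij}^{k}T_k$, so $\sigma(x,t)=(-x,t)$ descends to a Lie-group automorphism of $\g$. Since $\sigma$ commutes with the dilations $\dl$, it preserves the homogeneous gauge and hence also $\GammaG$, $\BG{r}{0}$ and the Haar measure. Writing $X_i=\partial_{x_i}+\sum_k a_{ik}(x)\partial_{t_k}$ with $a_{ik}$ linear in $x$ (hence $\sigma$-odd), a direct computation gives $X_i(v\circ\sigma)=-(X_iv)\circ\sigma$ for every smooth $v$, so $|\gradG(v\circ\sigma)|^{2}=|\gradG v|^{2}\circ\sigma$.

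\textbf{Step 2 (Choosing a $\sigma$-odd harmonic $u$).} By Theorem \ref{t2} there is an intrinsic harmonic function $u$ for which $\Phi$ in \eqref{c1} is monotone decreasing in a right neighborhood of $0$. The construction underlying Theorem \ref{t2} produces $u$ as a $\dl$-homogeneous harmonic polynomial, in the spirit of \cite{ferrari2023counterexample}; by selecting the homogeneity to be odd we force each monomial to carry an odd number of first-layer factors, whence $u\circ\sigma=-u$. Combined with Step~1 this gives $|\gradG u|^{2}\circ\sigma=|\gradG u|^{2}$.

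\textbf{Step 3 (From $\Phi$ to $J_u^{\g}$).} Because $u\circ\sigma=-u$ we have $u^{-}(g)=u^{+}(\sigma(g))$ and $\sigma(\{u<0\})=\{u>0\}$. Combining these with $\sigma(\BG{r}{0})=\BG{r}{0}$ and the $\sigma$-invariance of $\GammaG$ and of $|\gradG u|^{2}$, the change of variable $g\mapsto\sigma(g)$ yields
\begin{equation*}
\int_{\BG{r}{0}}|\gradG u^{-}|^{2}\GammaG\,dM=\int_{\BG{r}{0}}|\gradG u^{+}|^{2}\GammaG\,dM=\frac{1}{2}\int_{\BG{r}{0}}|\gradG u|^{2}\GammaG\,dM=\frac{r^{2}}{2}\,\Phi(r),
\end{equation*}
where the middle equality uses $|\gradG u|^{2}=|\gradG u^{+}|^{2}+|\gradG u^{-}|^{2}$ almost everywhere (the zero-set of the polynomial $u$ is negligible). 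Substituting into \eqref{c4} produces the clean identity $J_u^{\g}(r)=\tfrac{1}{4}\,\Phi(r)^{2}$. Since $\Phi(r)>0$ and is monotone decreasing near $0$, so is $J_u^{\g}$; in particular $J_u^{\g}$ fails to be monotone increasing in a right neighborhood of $0$.

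\textbf{Step 4 (Regularity).} Being a polynomial, $u$ is continuous and $\g$-harmonic on the whole of $\g$, in particular on both of the open sets $\{u>0\}$ and $\{u\leq 0\}^{o}$, so all hypotheses of Theorem \ref{t0} are met.

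\textbf{Expected main obstacle.} The delicate step is the parity selection in Step~2: the factorization $J_u^{\g}=\tfrac{1}{4}\Phi^{2}$ collapses without $u\circ\sigma=-u$. This is precisely why the result is stated in step $2$ and not in higher steps, since the reflection $(x,t)\mapsto(-x,t)$ is a Carnot automorphism exactly when the brackets close at depth two. For higher-step groups the analogous reduction would require a more elaborate symmetry (or a direct two-phase construction), and correspondingly Theorem \ref{t0} is confined to $s=2$.
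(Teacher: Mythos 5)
Your route is essentially the paper's: reduce $J_u^{\g}$ to $\tfrac14\Phi^2$ for the counterexample $u=u_{b,p,q}=\P_1-\P_3$ constructed behind Theorem \ref{t2}, using that $u$ is odd under the first-layer reflection $\sigma(x,t)=(-x,t)$ and that $\GammaG$, the Haar measure and $\BG{r}{0}$ are $\sigma$-invariant; your Step 3 identity $I_{u^+}=I_{u^-}=\tfrac12 r^2\Phi(r)$ and the conclusion coincide with the paper's proof of Theorem \ref{t3} specialized to step two. The one genuine difference is how the symmetry $\GammaG\circ\sigma=\GammaG$ is obtained: the paper deduces it from the Beals--Gaveau--Greiner representation formula \cite{beals1996green} (Theorem 5.12.1 in \cite{bonfiglioli2007stratified}), after passing to an isomorphic step-two group with skew-symmetric bracket matrices, whereas you argue abstractly through the automorphism $\sigma$. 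Your abstract argument is viable and arguably more self-contained, but not with the justification you give: ``$\sigma$ commutes with $\dl$, hence preserves the homogeneous gauge and $\GammaG$'' is a non sequitur, since anisotropic automorphisms of the horizontal layer (already in $\hf$) commute with the dilations yet preserve neither the gauge nor the fundamental solution. The correct completion, for which you already have the key identity $X_i(v\circ\sigma)=-(X_iv)\circ\sigma$, is: $\sigma$ is a measure-preserving group automorphism carrying the orthonormal horizontal frame to an orthonormal frame, hence $\DeltaG(v\circ\sigma)=(\DeltaG v)\circ\sigma$, so $\GammaG\circ\sigma$ is again a fundamental solution with pole at $0$ vanishing at infinity, and uniqueness gives $\GammaG\circ\sigma=\GammaG$; the $\sigma$-invariance of $\BG{r}{0}$ then follows because the ball is a superlevel set of $\GammaG$.

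Two small inaccuracies in Step 2 are harmless but should be repaired. First, the $u$ of Theorem \ref{t2} is not a single $\dl$-homogeneous polynomial: it is the difference of homogeneous harmonic polynomials of $\g$-degrees $1$ and $3$. Since in a step-two group odd $\g$-degree forces an odd number of first-layer factors in every monomial, your parity argument applies to each summand separately, and $u\circ\sigma=-u$ still follows; this is exactly the oddness of \eqref{polynomialcounterexample} that the paper records and exploits. Second, the bare statement of Theorem \ref{t2} does not supply this polynomial structure or the parity, so, as the paper itself does, you must invoke the explicit construction from its proof rather than the statement alone. With these repairs your Steps 3--4 are correct and the argument matches the paper's.
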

We proved this result in such generality only in two step Carnot groups since we can exploit both a symmetry property with respect to the variables associated to the first stratum of the function in the counterexample built starting from  Theorem \ref{t2}, and the symmetry property of the  fundamental solution with respect the first stratum variables, as it can be recognized in the representation formula due to Beals, Gaveau and Greiner \cite{beals1996green}.
 
 Carnot group of step $s >2$, in general, to our knowledge don't have a so explicit representation of the fundamental solution, even if, it is well known that $\Gamma_{\g} (P)=\Gamma_\g(P^{-1}),$ see for instance \cite{Folland_main}, where $\Gamma_{\g}$ denotes the fundamental solution, with pole in $0$, of the sub-Laplacian $\Delta_{\mathbb{G}}$ in the Carnot group $\mathbb{G}$.
 
 Hence, Theorem \ref{t3} applies only whenever it is possible to build a counterexample with an \textit{intrinsic odd} property, namely $u(P)=-u(P^{-1})$, see Section \ref{AppEngel}.


We point out that Theorem \ref{t2} is a consequence of Theorem \ref{t1}, where we state a sufficient condition that implies the decreasing behavior of \eqref{c1} in all the Carnot groups. 
 Section \ref{section2} provides a self contained notation used in this paper, Section \ref{Section3} describes the main tools that we exploit to obtain the counterexamples, in particular the proof of Theorem \ref{t1}. Section \ref{proof} is devoted to the proof of the main result. Section \ref{non} gives some sufficient conditions for the nonincreasing behavior of \eqref{c4} and contains the proof of Theorem \ref{t0}. Eventually, Section \ref{AppEngel} focuses on an explicit counterexample to the monotone increasing behavior of  \eqref{c1} in the three step Engel group.

\section{Carnot group settings} \label{section2}
In this section we introduce the main definitions and notations used in the framework of Carnot groups.
For the notations we follow \cite{ferrari2015harnack} and we refer to \cite{bonfiglioli2007stratified},  and \cite{vit08} for a broader exposition  of Carnot groups.

    A Lie group $\mathbb{G}$ is a manifold endowed with a structure of differential group, i.e. a group where the maps
    \[ (x,y) \mapsto xy \in \mathbb{G}, \quad x \mapsto x^{-1} \in \mathbb{G} \text{ are $C^{\infty}$ for any $x,y \in \mathbb{G}$}. \]
    A vector space $\mathfrak{g}$ is said to be a Lie algebra if there exists a bilinear and anti-symmetric map $[ \cdot , \cdot]:\mathfrak{g} \times \mathfrak{g} \mapsto \mathfrak{g}$ which satisfies the Jacobi's identity, i.e.,
    \[ [X, [Y,Z]] + [Y, [Z,X]] + [Z, [X,Y]]= 0 \quad \text{for all $X, Y, Z \in \mathfrak{g}.$}\]

Given two subalgebras $\mathfrak{a}, \mathfrak{b}$ of a Lie algebra $\mathfrak{g}$ we will denote with $[\mathfrak{a}, \mathfrak{b}]$ the vector subspace generated by the elements of $\{ [X,Y] \, : \, X \in \mathfrak{a} , Y \in \mathfrak{b} \}.$ We denote $\mathfrak{g}^{1}:= \mathfrak{g}$ and by induction $\mathfrak{g}^{l+1}:= [\mathfrak{g}, \mathfrak{g}^{l}]$. \\
We will say that $\mathfrak{g}$ is nilpotent of step $s$ if $\mathfrak{g}^{s} \neq \{ 0 \}$ and  $\mathfrak{g}^{s+1} = \{ 0 \}.$  \\

    A connected and simply connected stratified nilpotent Lie group $\mathbb{G}$ is said to be a Carnot group of step $s$ if its associated Lie algebra $\mathfrak{g}$ admits a stratification of step $s$,  if  $\mathfrak{g}_1, \dots, \mathfrak{g}_s$ linear subspaces exist such that, $\mathfrak{g}$ can be written as the direct sum of the $\mathfrak{g}_i$ and the $(i+1)$-th subspace is generated by commutating the elements of $\mathfrak{g}_1$ and $\mathfrak{g}_i$, i.e.
\[ \mathfrak{g}=\mathfrak{g}_1 \oplus \dots \oplus \mathfrak{g}_p, \quad [\mathfrak{g}_1, \mathfrak{g}_i]=\mathfrak{g}_{i+1}, \quad \mathfrak{g}_s \neq \{ 0\}, \quad \mathfrak{g}_i=\{ 0\} \text{ for $i > s$}.\]

The first layer $\mathfrak{g}_1$, often called horizontal layer, has a key role since can generate the whole space $\mathfrak{g}$ by commutation.  \\
The homogeneous dimension $Q$ of $\mathbb{G}$ is,
\[ Q:=\sum_{i=1}^{p} i \, dim(\mathfrak{g}_i).\]
Let us denote with $e$ the unit element on $\mathbb{G}$.   We recall that the map  $X \mapsto X(e)$, that associates with a left-invariant vector field $X$ its value at $e$, is an isomorphism from $\mathfrak{g}$ to the tangent space $T\mathbb{G}_{e}$, identified with $\mathbb{R}^{n}$. \\
 Let $m_i= dim(\mathfrak{g}_i) $ and $h_i=m_1+\dots+m_i$ for $i=1, \dots, p$.   Hence it holds $h_p=n$. 
We choose the basis $e_1, \dots, e_{n}$ of $\mathbb{R}^{n}$ adapted to the stratification of $\mathfrak{g}$, in the sense of having
\[ e_{h_{j-1}+1}, \dots, e_{h_{j}} \text{ as the basis of } \mathfrak{g}_j \text{ for each }j=1,\dots, p.\]
Moreover, let $\{X_1, \dots, X_{n} \}$ be the family of left invariant vector fields such that $X_{i}(e)=e_{i}$, $i= 1, \dots, n.$ 
The subbundle of the tangent bundle $T\g$  spanned by the vector fields associated to the horizontal layer $X_1, \dots, X_{m_1}$ is called the horizontal bundle $H\g$. 

For every $x \in \g$, each fiber $H\g_{x}=span \{ X_1(x), \dots, X_{m_1}(x)\}$  is endowed by a scalar product $\ps{\cdot}{\cdot}$ such that $\{X_1(x), \dots, X_{m_1}(x)\}$ becomes an orthonormal basis.

We identify the Carnot group $\g$, through exponential coordinates, with the Euclidean space $(\mathbb{R}^{n}, \cdot)$, where $n$ is the dimension of $\mathfrak{g}$, endowed with a suitable group operation. \\
For any $x \in \g$, the left translation $\tau_x: \g \mapsto \g$ is defined as \[ z \mapsto \tau_x z := x \cdot z.\]
For any $\lambda >0$, the dilations $\dl:\g \mapsto \g,$ is defined as 
\[ \dl(x_1, \dots, x_n)= (\lambda^{d_1}x_1, \dots, \lambda^{d_n}x_n),\]
where $d_i \in \mathbb{N}$ is called homogeneity of the variable $x_i \in \g$ (see \cite{folland1982hardy} Chapter 1) and is defined as 
\[ d_j=i \quad \text{whenever } h_{i-1}+1 \leq j \leq h_i,\]
hence, $1=d_1=\dots=d_{m_1} < 2=d_{m_1+1} \leq \dots \leq d_n=p.$ \\
We follow the notation of \cite{bonfiglioli2007stratified}, Chapter 20,  so that $\beta$ denotes  a multi-index with $n$ entries, $\beta=(\beta_1, \dots, \beta_n) \in \mathbb{N}^{n}$.  Let
\begin{equation*}
\begin{aligned} x^{\beta}&=x_1^{\beta_1}\cdot \cdot \cdot x_n^{\beta_n}, \\ \nor{\beta}=\beta_1 + \dots + \beta_n, &\quad \nor{\beta}_{\g}=d_1\beta_1+ \dots + d_n \beta_n.\end{aligned} \end{equation*}
We introduce  the $\g$-polynomial as the polynomial  $\P$ with respect to the coordinate system $(x_1, \dots, x_n) \in \g$ and we define the $\g$-degree of a $\g$-polynomial as follows,
\begin{equation} \label{degG}
    \deg_{\g}(\P) := \max \left\{ \nor{\beta}_{\g} \; : \; \P(x)=\sum_{\beta \in \mathbb{N}^{n}} c_{\beta} x^{\beta} \text{ with $c_{\beta} \neq 0$ for every $\beta$}  \right\}.
\end{equation}
From now on, for simplicity, a  $\g$-polynomial and the $\g$-degree of a   $\g$-polynomial are denoted omitting the letter $\g$. \\
We exploit the Haar measure of $\g$ which is the Lebesgue measure in $\mathbb{R}^{n}$ to have a notion of an integral in $\g$. Once the basis $X_1, \dots, X_{m_1}$ of the horizontal layer is fixed, we define for any function $f: \g \mapsto \mathbb{R},$ for which  $X_{j}f$  exists, the horizontal gradient of $f$, denoted by $\gradG f$, as the horizontal section
\[ \gradG f= \sum_{i=1}^{m_1} (X_{i}f)X_{i},\]
whose coordinates are $(X_1f, \dots, X_{m_1}f)$. In the same way it is possible to extend to higher order  this rappresentation.
Moreover, if $\phi= (\phi_1 \dots, \phi_{m_1})$ is  a horizontal section such that $X_j \phi_j \in L_{loc}^{1}(\g)$ for $j=1,\dots, m_1,$ we define $\divG \phi$ as the real valued function
\[ \divG ( \phi) := \sum_{j=1}^{m_1} X_{j} \phi_j.\]
We denote by $\DeltaG$ the sub-Laplacian on $\g$ the operator
\[ \DeltaG = \sum_{j=1}^{m_1} X_j^{2}.\]
As well as to the Euclidean case, we are interested in fundamental solution of $\DeltaG$.
For a deeper overview on the existence, the properties and the global estimates of fundamental solutions of sub-Laplacians we refer, for instance, to  \cite{folland1982hardy,bonfiglioli2007stratified}. 
 In particular, a function $\GammaG(\cdot, P):\g\setminus \{ P\} \mapsto \mathbb{R}$ is the fundamental solution of $\DeltaG$ with pole in $P \in \g$ if:
\begin{itemize}
    \item[(i)] $\GammaG(\cdot, P) \in C^{\infty}(\g \setminus \{ P\})$;
    \item[(ii)] $\GammaG(\cdot, P) \in L_{loc}^{1}(\g)$ and $\GammaG(M,P) \rightarrow 0$ when $M$ tends to infinity;
    \item[(iii)] $\DeltaG \GammaG(\cdot, P) = \delta_{P}$, being $\delta_{P}$ the Dirac measure supported at ${P}$. More explicitly,
    \[ \int_{\g} \GammaG(M,P) \DeltaG \varphi(M) dM = -\varphi(P) \quad \text{for any $\varphi \in C_{0}^{\infty}(\g). $} \]
\end{itemize}
We further introduce, as well, the $\g$-ball centered at $P$ with radius $r$, $\BG{r}{P}$, as the superlevel set of the fundamental solution $\GammaG(\cdot,P)$, \cite{bonfiglioli2007stratified},
 $$\BG{r}{P}= \{ M \in \g \, : \, \GammaG(M, P) \geq r^{-(Q-2)}   \}.$$

\section{A characterization via harmonic homogeneous polynomials}\label{Section3}
In this framework, each of two  factors of the ACF Formula in Carnot groups has the form,
\begin{equation} 
 \Phi(r) =\frac{1}{r^2} \int_{\BG{r}{P}} \nor{\gradG u(M)}^2 \GammaG(M,P) \, dM.\end{equation}

A key role in establishing an ACF Monotonicity Formula relies on the properties of the fundamental solution. 

However,  an explicit formula of the fundamental solution can be given only for a small class of Carnot groups. For instance, apart the Euclidean case, it can be done explicitly dealing with $H-$type groups that include Heisenberg groups, see \cite{folland1973fundamental}. 

In general the fundamental solution of a sub-Laplacian in a Carnot group is not known explicitly. Nevertheless, see \cite{bonfiglioli2007stratified}, a homogeneous norm exists $d(\cdot):= \nor{\cdot}$ such that $\GammaG(M,P)= C_{Q}d^{2-Q}(M^{-1}\circ P)$. Then it is possible to exploit the scale invariant properties of the fundamental solutions as well as those of the intrinsic harmonic functions to get a useful representation of \eqref{c1}. \newpage

For any $\lambda \in \mathbb{R}$, let $u_{\lambda}(P):=u(\dl P)$.  Then, the following properties hold:
\begin{itemize}
    \item[(G1)]  $\gradG u_{\lambda}(P)= \lambda (\gradG u)(\dl P);$
    \item[(G2)] If $u$ satisfies $\DeltaG u=0$ then $\DeltaG u_{\lambda}=0$;
    \item[(G3)] For any $M \in \g,$ $\GammaG(\dl M,P)=\lambda^{2-Q} \GammaG(M,P)$. 
\end{itemize}

The main result of this section is the following one.
\begin{theorem} \label{t1}
Let $\g$ be a Carnot group. If $\P_1$ and $\P_3$ are homogeneous harmonic polynomials of degree  $1$ and $3$ respectively, such that
\begin{equation}\label{c2}
     a_{2}: = \int_{\BG{1}{0}} \displaystyle \ps{\gradG \P_1(P)}{\gradG \P_3(P)}  \GammaG(P) \, dP \text{ is positive.} \end{equation}
Then, the function $u:=\P_1 - \P_3$ enjoys $\DeltaG u=0$ and \eqref{c1} is monotone decreasing in a right neighborhood of $0$.
\end{theorem}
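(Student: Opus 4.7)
The plan is to reduce $\Phi(r)$ to an explicit polynomial in $r^2$ by exploiting the three homogeneity properties (G1)--(G3), and then simply look at the sign of $\Phi'(r)$ near $0$. The harmonicity of $u$ is immediate: $\Delta_{\g}$ is linear, so $\Delta_{\g} u = \Delta_{\g}\P_1 - \Delta_{\g}\P_3 = 0$ by hypothesis.

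The first substantive step is a change of variables. Setting $M = \dl Q$ with $\lambda = r$, properties (G1)--(G3) give, for a $\g$-homogeneous harmonic polynomial $\P$ of $\g$-degree $k$, the identity $(\gradG \P)(\dr Q) = r^{k-1}\gradG \P(Q)$. Consequently
\begin{equation*}
(\gradG u)(\dr Q) = \gradG \P_1(Q) - r^{2}\,\gradG \P_3(Q),
\end{equation*}
and its squared norm expands as $|\gradG \P_1(Q)|^2 - 2r^{2}\ps{\gradG \P_1(Q)}{\gradG \P_3(Q)} + r^{4}|\gradG \P_3(Q)|^2$. Combining this with $\GammaG(\dr Q) = r^{2-Q}\GammaG(Q)$, with the Jacobian $r^{Q}$ of the dilation, and with the fact that $\BG{r}{0}$ is the $\dr$-image of $\BG{1}{0}$, the prefactor $1/r^2$ collapses the whole expression to
\begin{equation*}
\Phi(r) \;=\; a_0 \;-\; 2\,a_2\, r^{2} \;+\; a_4\, r^{4},
\end{equation*}
where $a_0 = \int_{\BG{1}{0}} |\gradG \P_1|^2 \GammaG\, dQ$, $a_2$ is the quantity in \eqref{c2}, and $a_4 = \int_{\BG{1}{0}} |\gradG \P_3|^2 \GammaG\, dQ \geq 0$.

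From here the conclusion is mechanical: $\Phi'(r) = -4a_2 r + 4a_4 r^{3} = 4r\bigl(a_4 r^{2} - a_2\bigr)$. Since $a_2 > 0$ by assumption, for every $r \in (0, \sqrt{a_2/a_4})$ (or for every $r > 0$ if $a_4 = 0$) one has $\Phi'(r) < 0$, so $\Phi$ is strictly decreasing in a right neighborhood of $0$, which is exactly the claim.

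I do not expect any serious obstacle here; the proof is essentially a homogeneity bookkeeping. The only point requiring care is to verify the scaling law $(\gradG \P)(\dr Q) = r^{k-1}\gradG \P(Q)$, which rests on (G1) applied to the dilated function $\P_r(Q) := \P(\dr Q) = r^{k}\P(Q)$, together with the fact that the basis $X_1,\dots,X_{m_1}$ of the horizontal layer is itself $1$-homogeneous under $\dr$. Once this is in place, the polynomial expression for $\Phi(r)$ drops out immediately, and the key structural insight — that the cross term $-2a_2 r^2$ is what drives the monotone decrease near $0$, exactly because the two intrinsic gradients need not be orthogonal in the non-commutative setting — is displayed by the formula itself.
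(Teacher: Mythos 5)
Your proof is correct and follows essentially the same route as the paper: the same dilation change of variables combined with (G1)--(G3) and the homogeneity of $\P_1,\P_3$ to reduce $\Phi(r)$ to the polynomial $a_0-2a_2r^2+a_4r^4$, and then the sign of $\Phi'(r)=4r(a_4r^2-a_2)$ near $0$. The only cosmetic difference is that you state the gradient scaling law $(\gradG\P)(\dr Q)=r^{k-1}\gradG\P(Q)$ up front and note the trivial case $a_4=0$, which the paper handles implicitly.
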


\begin{proof} 
Let
    \begin{align*}
        \Phi(r)&=\frac{1}{r^2} \int_{\BG{r}{0}} \nor{\gradG u(M)}^2 \GammaG(M) \, dM \\
        &= \frac{1}{r^2} \int_{\BG{1}{0}} \nor{\gradG u(\dr P)}^2 \GammaG(\dr P) r^{Q}  \, dP 
        \intertext{where $M \in \BG{r}{0}$ is seen as $\dr P$, with $P \in \BG{1}{0},$}
        &\underbrace{=}_{(G1)} \frac{1}{r^2} \int_{\BG{1}{0}} \nor{\gradG (u(\dr P)) r^{-1}}^2 \GammaG(\dr P) r^{Q}  \, dP \\
        &\underbrace{=}_{(G3)}  \frac{1}{r^2} \int_{\BG{1}{0}} \nor{\gradG (u(\dr P)) r^{-1}}^2 r^{2-Q}\GammaG(P) r^{Q}  \, dP \\
        &= \int_{\BG{1}{0}} \nor{\gradG (u(\dr P)) r^{-1}}^2 \GammaG(P) \, dP. 
    \end{align*}
Suppose now that $u$ is the sum of two homogeneous harmonic polynomials. Let $\P_m$ and $\P_h$ be two homogeneous harmonic polynomials of degree $m$ and $h$ respectively, i.e. 
\[ \DeltaG(\alpha\P_m+ \beta \P_h)=0, \quad \alpha,\beta \in \mathbb{R},\]
by homogeneity one also have that $\P_m(\dr P)=\lambda^{m} \P_m(P)$ so,
\[ \lambda [\gradG \P_m](\dl P)\underbrace{=}_{(G1)}\gradG(\P_m(\dl P)) = \lambda^{m} [\gradG \P_m](P), \]
therefore  it holds
\[ [\gradG \P_m](\dl P)= \lambda^{m-1} [\gradG \P_m](P),\]
and an analogous result hold for  $\P_h$ as well.

Hence, $\nor{\gradG(\P_m + \P_h)(\dl P)}= \nor{\lambda^{m-1 }\gradG \P_m(P) + \lambda^{h-1} \gradG \P_h(P)}$. \\
Let us select a polynomial of degree $1$ and one of degree $3$, both intrinsically harmonic.  Denote
\[ u(P):=\P_1(P) - \P_3(P), \]
where the lower index denotes the homogeneity of the polynomial.  
As a consequence we get,
\begin{align*}
    \Phi(r)&= \int_{\BG{1}{0}} \displaystyle \nor{\gradG \biggl(\frac{\P_1(\dr P)}{r}\biggl)- \gradG \biggl(\frac{\P_3(\dr P)}{r}\biggl)}^2 \GammaG(P) \, dP \\
    &= \int_{\BG{1}{0}} \displaystyle \biggl( \nor{\gradG \biggl(\frac{\P_1(\dr P)}{r}\biggl)}^2 + \nor{\gradG \biggl(\frac{\P_3(\dr P)}{r}\biggl)}^2 \biggl) \GammaG(P) \, dP + \\
    & \, - 2\int_{\BG{1}{0}} \displaystyle \ps{\gradG \biggl(\frac{\P_1(\dr P)}{r}\biggl)}{\gradG \biggl(\frac{\P_3(\dr P)}{r}\biggl)} \GammaG(P) \, dP,
    \intertext{by homogeneity of $\P_1, \P_3$,} 
    &= \int_{\BG{1}{0}} (\nor{\gradG \P_1(P)}^2+r^4 \nor{\gradG \P_3(P)}^2) \GammaG(P) \, dP + \\
    & \, - 2 \int_{\BG{1}{0}} \displaystyle \ps{\gradG \P_1(P)}{\gradG \P_3(P)} r^2 \GammaG(P) \, dP,
    \intertext{so that we obtain,}
    &= a_{0} - 2r^2 a_2 + a_4 r^{4},\\
        \text{where:}  \: a_{0} &= \int_{\BG{1}{0}} \nor{\gradG \P_1(P)}^2 \GammaG(P) \, dP ;\\
        a_{2} &= \int_{\BG{1}{0}} \displaystyle \ps{\gradG \P_1(P)}{\gradG \P_3(P)}  \GammaG(P) \, dP ;  \\
        a_{4} &=   \int_{\BG{1}{0}} \nor{\gradG \P_3(P)}^2 \GammaG(P) \, dP.
    \end{align*}
The proof is complete, since $ \Phi'(r)=-4 a_2 \, r + 4 a_4\, r^3 $ is negative in a neighborhood of $0$. For instance if $r \leq \sqrt{\frac{a_2}{a_4}}$.
\end{proof}
\begin{nota}
    Theorem \ref{t1} cannot be applied to the trivial case of the Euclidean $\mathbb{R}^{n}$  since the Euclidean harmonic polynomial are orthogonal in the following sense.  For every $\P_k,\P_h $  harmonic homogeneous polynomials of degree $h$ and $k$ respectively
    \begin{equation}\label{euclidean}
        \int_{B_1(0)} \ps{\nabla \P_h(x)}{\nabla \P_k(x)} \nor{x}^{2-n} dx = 0, \qquad h\neq k.
    \end{equation}
\end{nota}
 We recall here the short proof, see e.g. \cite{Muller66}.
    From coarea formula we obtain
    \begin{equation}
         \int_{B_1(0)} \ps{\nabla \P_h(x)}{\nabla \P_k(x)} \nor{x}^{2-n} dx = \int_{0}^{1} \rho^{2-n} \int_{\partial B_{\rho}(0)} \ps{\nabla \P_h(x)}{\nabla \P_k(x)} \, d\sigma(x) \, d\rho.
    \end{equation}
Each component of the gradient of $\P_h$ and $\P_k$ are still homogeneous polynomials so we obtain the thesis if 
\begin{equation} \label{ortho}
    \int_{\partial B_{\rho}(0) } \P_h(x) \P_k(x) \, d\sigma(x) = 0.
\end{equation}
The equation \eqref{ortho} is a consequence of the divergence theorem. Indeed, being $\frac{x}{\rho}$ the  outward normal to $\partial B_{\rho}(0)$, 
\begin{equation}
    \begin{aligned} \label{ap2}
        0&= \int_{B_{\rho}(0)} (\P_h \Delta \P_k - \P_k \Delta P_h) \, dx = \int_{B_{\rho}(0)} \div (\P_h\nabla\P_k - \P_k \nabla \P_h) \, dx \\
        &= \int_{\partial B_{\rho}(0)} \ps{\P_h\nabla\P_k - \P_k \nabla \P_h}{\frac{x}{\rho}} \, d\sigma(x) \\ &= \int_{\partial B_{\rho}(0)} (\P_h \ps{\nabla\P_k}{\nu}- \P_k \ps{\nabla\P_h}{\nu}) \, d\sigma(x),
    \end{aligned}
    \end{equation}
   where $\nu:=\frac{x}{\rho}.$
    Let $\beta \in \mathbb{N}^{n} $ and
    $$ \P_k(x)=\sum_{\beta, \nor{\beta}=k} b_{\beta} x^{\beta},$$ then
    \begin{equation} \label{ap0}
    \nabla \P_k(x) = \sum_{\beta, \nor{\beta}=k} b_{\beta} (\beta_1 x_1^{\beta_1-1}x_2^{\beta_2}\dots x_n^{\beta_n}, \dots , \beta_n x_1^{\beta_1}\dots x_n^{\beta_n-1}).\end{equation}
    \begin{equation}\begin{aligned} \label{ap3}
        \ps{\nabla \P_k}{\nu}&= \frac{1}{\rho}\sum_{\beta, \nor{\beta}=k} b_{\beta} (\beta_1 x_1^{\beta_1}x_2^{\beta_2}\dots x_n^{\beta_n}+ \dots + \beta_n x_1^{\beta_1}\dots x_n^{\beta_n}) \\
        &= \frac{1}{\rho}\sum_{\beta, \nor{\beta}=k} b_{\beta} (\beta_1 + \dots + \beta_n) x_1^{\beta_1}x_2^{\beta_2}\dots x_n^{\beta_n}= \frac{1}{\rho}\sum_{\beta, \nor{\beta}=k} b_{\beta} \nor{\beta} x^{\beta} = \frac{k \P_k}{\rho}.    \end{aligned}
        \end{equation}
        Applying \eqref{ap3} to \eqref{ap2}, we get
        \begin{equation*}
            0 = \frac{1}{\rho}\int_{\partial B_{\rho}(0)}  (\P_h \ps{\nabla\P_k}{x}- \P_k \ps{\nabla\P_h}{x}) \, d\sigma(x) =\frac{1}{\rho} \int_{\partial B_{\rho}(0)} (\P_h k \P_k - \P_k h \P_h) \, d\sigma(x),
        \end{equation*}
        that gives for $h\neq k$
        $$\int_{\partial B_{\rho}(0)} \P_h \P_k \, d\sigma = 0.$$

\section{The general case} \label{proof}
In this section we investigate the condition on the positivity of $a_2$ in order to apply the same argument to all Carnot groups. We reduce the problem to  Carnot groups with step two and three.
Afterwards, we consider directly such groups and provide a method to build an explicit counterexample. 

First of all we recall a result about the representation of the vector fields belonging to the horizontal layer, which which is crucial to the reduction of the problem.
\begin{prop}[in \cite{folland1982hardy} is Proposition 1.26] \label{p1}
If $j=1,\dots, m_1$, the vector fields $X_j$ have polynomial coefficients and have the form
\begin{equation} \label{c3}
    X_j(x)= \partial_{x_j} + \sum_{k\geq 1; d_k>1} p_{j,k}(x) \, \partial_{x_k}
\end{equation}
where the $p_{j,k}$ are $\g$-homogeneous polynomials of degree $d_{k}-1$ for $d_k > 1.$
\end{prop}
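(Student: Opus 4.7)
The plan is to work in exponential coordinates adapted to the stratification $\mathfrak{g}=\mathfrak{g}_1 \oplus \cdots \oplus \mathfrak{g}_s$, so that a point of $\g$ is identified with $(x_1,\dots,x_n)\in\mathbb{R}^{n}$ and the group law $x\cdot y$ is given, via the Baker--Campbell--Hausdorff formula, by a \emph{polynomial} expression in $(x,y)$ (a finite sum because $\mathfrak{g}$ is nilpotent of step $s$). By definition, the left-invariant vector field $X_j$ is determined by $X_j(e)=e_j\in\mathfrak{g}_1$ together with the identity
$$X_j(x)f=\frac{d}{dt}\bigg|_{t=0}f(x\cdot(te_j)),\qquad j=1,\dots,m_1.$$

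First I would expand $x\cdot(te_j)$ by BCH and differentiate in $t$ at $0$. Each coordinate of $x\cdot(te_j)$ is a polynomial in $x$ and $t$, so the coefficient of $\partial_{x_k}$ in $X_j(x)$ is automatically a polynomial $p_{j,k}(x)$. Moreover, the part of BCH linear in $t$ consists of $te_j$ plus nested brackets $\tfrac{1}{2}[x,te_j]$, $\tfrac{1}{12}[x,[x,te_j]]$, and so on; since $e_j\in\mathfrak{g}_1$, every such iterated bracket belongs to $\mathfrak{g}_{\ell}$ for some $\ell\geq 2$ and therefore contributes only to coordinates $x_k$ with $d_k\geq 2$. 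This already yields the structural form
$$X_j(x)=\partial_{x_j}+\sum_{k:\,d_k>1} p_{j,k}(x)\,\partial_{x_k}.$$

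The remaining point is the $\g$-homogeneity of each $p_{j,k}$. Here I would use that $\dl$ is a Lie group automorphism of $\g$ whose differential acts on $\mathfrak{g}_i$ by the scalar $\lambda^{i}$; in particular $(\dl)_*e_j=\lambda\, e_j$ for $j\leq m_1$. Combining left-invariance of $X_j$ with this automorphism property one obtains the intertwining $(\dl)_*X_j=\lambda X_j$. Writing the identity in coordinates, with $(\dl)_*\partial_{x_k}=\lambda^{-d_k}\partial_{x_k}$, and matching coefficients yields the functional equation $p_{j,k}(\dl x)=\lambda^{d_k-1}p_{j,k}(x)$, i.e.\ $\g$-homogeneity of $p_{j,k}$ of degree $d_k-1$.

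The main obstacle will be keeping the BCH bookkeeping honest: one must verify that differentiation at $t=0$ really does kill every term of order two or higher in $t$, so that only the linear-in-$t$ part of BCH survives, and one must track how nested brackets with $e_j$ in the rightmost slot distribute among the coordinates labelled by $d_k>1$. Once that is pinned down, the homogeneity step is a clean consequence of $\dl$ being an automorphism of the group law, and Proposition \ref{p1} follows.
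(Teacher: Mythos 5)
The paper offers no proof of this proposition: it is quoted directly from Folland--Stein \cite{folland1982hardy} (Proposition 1.26), so there is no internal argument to compare against, and your sketch is essentially the standard proof of that cited result. Its outline is correct: in exponential coordinates the BCH series terminates by nilpotency, so each coordinate of $x\cdot(te_j)$ is a polynomial in $(x,t)$; differentiating at $t=0$ retains exactly the part of BCH that is linear in $t$, namely $t\bigl(e_j+\tfrac12[x,e_j]+\tfrac1{12}[x,[x,e_j]]+\dots\bigr)$; and since $e_j\in\mathfrak{g}_1$ and $[\mathfrak{g}_a,\mathfrak{g}_b]\subseteq\mathfrak{g}_{a+b}$, every surviving iterated bracket lies in $\bigoplus_{i\geq 2}\mathfrak{g}_i$, which yields the form \eqref{c3} with polynomial coefficients, coefficient $1$ on $\partial_{x_j}$, and no other first-stratum directions. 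The homogeneity step is also the right mechanism, but as written your two transformation rules mix conventions: with the pushforward one has $(\dl)_*\partial_{x_k}=\lambda^{d_k}\partial_{x_k}$ (not $\lambda^{-d_k}$), to be paired with $(\dl)_*X_j=\lambda X_j$, while with the pullback convention both exponents flip; equivalently, one can simply use $X_j(f\circ\dl)=\lambda\,(X_jf)\circ\dl$. Any consistent choice gives $p_{j,k}(\dl x)=\lambda^{d_k-1}p_{j,k}(x)$, and since $p_{j,k}$ is already known to be a polynomial this is precisely $\g$-homogeneity of degree $d_k-1$. So the proposal is sound and matches the classical argument; only that bookkeeping slip in the dilation action should be fixed.
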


As a consequence, we are in position to prove the main result, Theorem \ref{t2}

\begin{proof}(of Theorem \ref{t2}) Let us note that, by definition of $a_2$,
\[ \ps{\gradG \P_1(P)}{\gradG \P_3(P)} > 0\:\: \text{ a.e. in $\BG{1}{0}$} 
\]
 implies that
\[a_{2} := \int_{\BG{1}{0}} \displaystyle \ps{\gradG \P_1(P)}{\gradG \P_3(P)}  \GammaG(P) \, dP >0. \] 

Recalling the notation of Section \ref{section2} being $P \in \g$ as $P\equiv(x_1 ,\dots, x_n) \in \mathbb{R}^n$, a first degree homogeneous polynomial has to be  a linear combination of the first $m_1$ component,
\[ \P_1(x) = \sum_{k=1}^{m_1} b_{k}x_{k}.\]
Hence, from Proposition \ref{p1}, $\gradG \P_1(x)=(b_1, \dots, b_{m_1})$ is constant. \\
Note also that \eqref{c3} can be explicitly written as:
\begin{equation} \label{1}
 X_j= \partial_{x_j} + \sum_{k\geq 1; d_k=2} p_{j,k}^{1} \, \partial_{x_k}+ \sum_{k\geq 1; d_k=3} p_{j,k}^{2} \, \partial_{x_k} + T_{j} \end{equation}
with $$ T_j= \sum_{k\geq 1; d_k>3} p_{j,k}^{d_k -1} \, \partial_{x_k}.$$ 
A homogeneous harmonic polynomial of degree three, according to the definition of \eqref{degG}, is
\begin{equation*}
\P_3(x) = \sum_{\substack{\beta \in \mathbb{N}^{n}\\ \nor{\beta}_{\g}=3} } c_{\beta}x^{\beta}.
\end{equation*}
In particular, it cannot contain variables with homogeneity $d_k >3$. Thus,
if we apply $X_j$ to $\P_3,$ the vector field $T_j$ doesn't produce any contribution.
Arguing in this way, we handle the general case.

We start reducing ourselves to a simpler case, by considering a basic $2$-step Carnot group and exhibiting an explicit counterexample that may be generalized to groups with greater step.  

Thus, we have $\g$ whose Lie algebra is
     $\mathfrak{g}= \mathfrak{g}_1 \oplus \mathfrak{g}_2 $ with $\mathfrak{g}_1$ generated by $X_1, \dots, X_{m_1}$, $\mathfrak{g}_2$ by $Y_1,Y_2,\dots,Y_{n-m_1}$ and, via Proposition \ref{p1}, the horizontal vector fields are:
     \begin{equation}\label{2step}
         X_i=\partial_{x_i} + \sum_{j=1}^{n-m_1 } \left( \sum_{k=1}^{ m_1}\alpha_{kj}^{i}x_k \right) \partial_{y_j} \qquad i=1,\dots, m_1.
     \end{equation}
     We notice that some vector fields might commute, but being $\g$  a Carnot group of step two, it holds that for some $\Tilde{i},\Tilde{s} \in \{1, \dots,m_1 \}$ we have $[X_{\Tilde{i}},X_{\Tilde{s}}] \neq 0$ that together with \eqref{2step} means that exists at least a $\Tilde{j} \in \{1, \dots, n-m_1 \}$ for which
$\alpha_{\Tilde{s}\Tilde{j}}^{\Tilde{i}} \neq \alpha_{\Tilde{i}\Tilde{j}}^{\Tilde{s}}$,
in fact
 \begin{align*}
    [X_i, X_s] &= \biggl(\partial_{x_i} + \sum_{j=1}^{n-m_1 } \left( \sum_{k=1}^{ m_1}\alpha_{kj}^{i}x_k \right) \partial_{y_j}\biggl ) \biggl(\partial_{x_s} + \sum_{j=1}^{n-m_1 } \left( \sum_{k=1}^{ m_1}\alpha_{kj}^{s}x_k \right) \partial_{y_j}\biggl) \\
    & - \biggl(\partial_{x_s} + \sum_{j=1}^{n-m_1} \left( \sum_{k=1}^{ m_1}\alpha_{kj}^{s}x_k \right) \partial_{y_j}\biggl)  \biggl(\partial_{x_i} + \sum_{j=1}^{n-m_1} \left( \sum_{k=1}^{m_1}\alpha_{kj}^{i}x_k \right) \partial_{y_j}\biggl )  \\
    &= \sum_{j=1}^{n-m_1} (\alpha_{ij}^{s}- \alpha_{sj}^{i}) \partial_{y_j}.
\end{align*}
Let us simplify the notation considering $\Tilde{i}=1,\Tilde{s}=2$ and $y_{\Tilde{j}}=y$ with the corresponding vector fields $X_1,X_2$ and $Y=\partial_{y}$. Moreover denote with $(\alpha_{k}^{i})_{i,k=1,\dots,m_1}$ the coefficients with respect to $\partial_{y}$ via \eqref{2step}, so we have
\begin{equation} \label{cond1}
     [X_{\Tilde{i} }, X_{\Tilde{s}}]= [X_1, X_2]= (\alpha_{1}^{2}-\alpha_{2}^{1})\partial_{y} + \sum_{j=2}^{n-m_1} (\alpha_{1j}^{2}- \alpha_{2j}^{1}) \partial_{y_j}  \neq 0 \quad \text{and }\alpha_{1}^{2}-\alpha_{2}^{1} \neq 0 .
\end{equation}
Now, consider the polynomials  
\begin{equation}\begin{aligned}\P_1&=bx_2, \\ \label{choiceP3}\P_3 &= c_1 x_1^3+c_2x_1^2 x_2+ c_3x_1x_2^2+ c_4x_2^3 + c_5x_1y - c_5x_1 \left[ \sum_{i=3}^{m_1} \left( \frac{1}{2}  \alpha_{i}^{1}x_1x_i + \alpha_{i}^{2} x_2 x_i \right) \right], \end{aligned} \end{equation}
with $b,c_1,c_2,c_3,c_4,c_5 \in \mathbb{R}$ to be determined. 
For $\P_1$ we have
$$ \nabla_{\g} \P_1 =(0,b, 0, \dots, 0), \quad \DeltaG \P_1=0;$$
For $\P_3$, computing the derivatives, we get
\begin{align*}
    X_1 \P_3 &= (3c_1 + \alpha_1^{1} c_5)x_1^2 + (2c_2 + \alpha_2^{1} c_5)x_1x_2 +  c_3  x_2^2  +c_5 y \\&+ c_5x_1\left( \sum_{i=3}^{m_1}\alpha_{i}^{1}x_i \right)
     - c_5x_1\left( \sum_{i=3}^{m_1}\alpha_{i}^{1}x_i \right) - c_5 \left( \sum_{i=3}^{m_1}\alpha_{i}^{1}x_ix_2 \right);   \\ 
    X_2 \P_3&= (c_2 + \alpha_1^2 c_5)x_1^2 + (2c_3+\alpha_2^2 c_5)x_1x_2 + 3c_4 x_2^2 + c_5 x_1 \left( \sum_{i=3}^{m_1}\alpha_{i}^{2}x_i\right)  - c_5 \left( \sum_{i=3}^{m_1}\alpha_{i}^{2}x_ix_1 \right); \\
    \DeltaG \P_3 &= [ 6c_1 + 2c_3 + (\alpha_{2}^{2}+3\alpha_1^1) c_5]x_1 + [2c_2 + 6c_4 + 2\alpha_2^1 c_5]x_2 ;  \\
    \ps{\gradG \P_1}{\gradG \P_3} &=   b(c_2 + \alpha_{1}^{2} c_5)x_1^2  + b(2c_3+\alpha_{2}^{2} c_5)x_1x_2     + 3 b c_4x_2^2.
\end{align*}
Now, we impose the $\g$-harmonicity of $\P_1 - \P_3,$  furthermore we require  the cancellation of the mixed term of $\ps{\gradG \P_1}{\gradG \P_3}$ and we add two conditions to fix the positive sign. In this manner we obtain the following system:  
\begin{equation}\label{s2}
     \left\{  \begin{array}{llllll}
6c_1 + 2c_3 + (\alpha_{2}^{2}+3\alpha_1^1) c_5 = 0 \\
2c_2 + 6c_4 + 2\alpha_2^2 c_5  = 0 \\
 2b c_3 +  b \alpha_{2}^{2}c_5= 0 \\
bc_2 + b \alpha_{1}^{2} c_5 = p \\
 3b c_4 =q, 
 \end{array}\right.
 \end{equation}
 assuming the condition $(p,q) \in D,$  with $D := \{(x,y) \in \mathbb{R}^{2} \setminus\{ (0,0)\}, x \geq 0, y \geq 0 \}$.
 
 Fixed $b \in \mathbb{R}^{*}=\mathbb{R}\setminus\{0\}$, \eqref{s2} is a $5\times5$ linear system of the form $A_{b} \vec{c} = \Vec{v},$ where $\Vec{c}=(c_1, c_2, c_3,c_4, c_5)$, $\Vec{v}=(0,0,0,p,q)$ and
 \begin{equation} \label{m1}
     A_{b}=  \begin{pmatrix}
         6 & 0 & 2 & 0 & \alpha_{2}^{2} +3\alpha_{1}^{1}\\
         0 & 2 & 0 & 6 & 2\alpha_{2}^{1}\\
         0 & 0 & 2b & 0 & \alpha_{2}^{2} b \\
         0 & b & 0 & 0 & \alpha_{1}^{2} b  \\
         0 & 0 & 0& 3b & 0 
      \end{pmatrix} 
 \end{equation}
 For every $(\alpha_1^1,\alpha_2^1,\alpha_{1}^{2}, \alpha_{2}^{2}) \ \in \mathbb{R}^{4*}$ and $\alpha_2^1 \neq \alpha_{1}^{2}$ the system \eqref{s2} has a unique solution, since it is
 equivalent to the one associated with the following matrix obtained by the Gauss method:
  \begin{equation} \label{m2}
     \tilde{A}_{b}\vert \tilde{v}=  \begin{pmatrix}
         6 & 0 & 2 & 0 & \alpha_{2}^{2} +3\alpha_{1}^{1}\\
         0 & 2b & 0 & 6b & 2\alpha_{2}^{1}b\\
         0 & 0 & 2b & 0 & \alpha_{2}^{2} b \\
         0 & 0 & 0 & -6b & 2\alpha_{1}^{2} b-2\alpha_2^1b  \\
         0 & 0 & 0& 0 &  2\alpha_{1}^{2} b-2\alpha_2^1b 
      \end{pmatrix} \vert \begin{pmatrix}
          0\\
          0\\
          0\\
          p  \\
          p+q
      \end{pmatrix}. 
 \end{equation}
and since the determinant of $\tilde{A}_{b}$ is,
 $$ \det \tilde{A}_{b} = -72b^4(\alpha_{1}^{2} - \alpha_2^1).$$
Computing explicitly 
the resolving coefficients $c_i$ for $i=1, \dots, 5$ we have
\begin{equation} \label{coeff}
\begin{aligned}
    c_1 &= \frac{\alpha_1^1(p+q)}{2b(\alpha_{2}^{1}-\alpha_1^2)} , \\  
    c_2 &= \frac{\alpha_1^2 q +\alpha_{2}^{1} p}{b(\alpha_{2}^{1}-\alpha_1^2)},  \\ 
    c_3 &=  \frac{\alpha_{2}^{2}(p+q)}{2b(\alpha_{2}^{1}-\alpha_1^2)}, \\ 
    c_4 &= \frac{q}{3b}, \\  
    c_5 &=  \frac{p+q}{b(\alpha_{1}^{2} - \alpha_2^1)}.
 \end{aligned}
 \end{equation}
Hence, it is possible with this method to produce an explicit counterexample for a general Carnot group of step two. \\
The same procedure, with just technical complications, can berepeated considering a $3$-step Carnot group. Here, we suppose $\mathbb{R}^{n}$ as
\begin{equation*}
    \begin{aligned}
        \mathbb{R}^{n}&= \mathbb{R}^{m_1} \times \mathbb{R}^{m_2} \times \mathbb{R}^{n-m_1-m_2} \\
        x \in \mathbb{R}^{n}, \; x&=(x_1,x_2, \dots, x_{m_1}, y_{1}, \dots, y_{m_2},t_1, \dots, t_{n-m_1-m_2}).
    \end{aligned}
\end{equation*} Analogously to the previous argument, the corresponding basis of $\mathfrak{g},$  can be represented in such a way that
$X_1,X_2,\dots, X_{m_1}$ is a basis of $\mathfrak{g}_1$, $Y_1,\dots,Y_{m_2}$ is a basis of $\mathfrak{g}_2$, and $T_1, \dots, T_{n-m_1-m_2}$ as a basis of $\mathfrak{g}_3$. 
Without loss of generality, like in the previous case, we assume that
$X_1,X_2$ are such that $[X_1,X_2] \neq 0$,where:
 
\begin{equation}
    \begin{aligned} \label{3step}
        X_1&=\partial_{x_1} + \sum_{j=1}^{m_2} \left( \sum_{i=1}^{m_1} \gamma_{ij}^1 x_i\right) \partial_{y_j}+ \sum_{j=1}^{n-m_1-m_2} \left( \sum_{k,i=1}^{m_1} \delta_{kij}^{1}x_ix_k + \sum_{i=1}^{m_2} \theta_{ij}^{1}y_i\right) \partial_{t_{j}} \\
        &=\partial_{x_1} + \sum_{j=1}^{m_2} \left( \sum_{i=1}^{m_1} \gamma_{ij}^1 x_i\right) \partial_{y_j}+  \sum_{j=1}^{n-m_1-m_2} p_{1,j}^{2}  \partial_{t_{j}} \\
         X_2&=\partial_{x_2} + \sum_{j=1}^{m_2} \left( \sum_{i=1}^{m_1} \gamma_{ij}^2 x_i\right) \partial_{y_j}+ \sum_{j=1}^{n-m_1-m_2} \left( \sum_{k,i=1}^{m_1} \delta_{kij}^{2}x_ix_k + \sum_{i=1}^{m_2} \theta_{ij}^{2}y_i\right) \partial_{t_{j}} \\
         &=\partial_{x_2} + \sum_{j=1}^{m_2} \left( \sum_{i=1}^{m_1} \gamma_{ij}^2 x_i\right) \partial_{y_j} + \sum_{j=1}^{n-m_1-m_2} p_{2,j}^{2} \partial_{t_{j}}
    \end{aligned}
\end{equation}
The step two case allows to consider in \eqref{3step} the polynomials $p_{1,j}^{2}$ and $p_{2,j}^{2}$ to be nonzero. \\
Being
\begin{align*}
    [X_1,X_2]= \sum_{j=1}^{m_2} (\gamma_{1j}^{2} -\gamma_{2j}^{1})\partial_{y_{j}} + \sum_{j=1}^{n-m_1-m_2} \Tilde{p}_{1j} \partial_{t_j},
\end{align*}
where $\Tilde{p}_{1j}$ are homogeneous polynomials of degree $1$, but since the vector field has to belong to the second stratum, $\Tilde{p}_{1j}$ doesn't appear.

In order to have a step three stratified Lie group, if $X_1,X_2$ are defined as \eqref{3step}, then we have to require:
\begin{equation}
    \begin{aligned} \label{hp3}
       [X_1, X_2]&= (\gamma_{1}^{2} - \gamma_{2}^{1}) \partial_{y} + \sum_{j=2}^{m_2} (\gamma_{1j}^{2} -\gamma_{2j}^{1}) \partial_{y_{j}}  \neq 0, \\ 
        [X_1 , Y] &= k_1 \partial_{t_1} + \sum_{j=2}^{n-m_1-m_2} k_{j} \partial_{t_{j}}\neq 0.
    \end{aligned}
\end{equation}
with, for instance, $\gamma_{1}^{2} - \gamma_{2}^{1} \neq 0$ and $k_1\in \mathbb{R}^{*}$.
From the hypothesis made in \eqref{hp3} it is possible to consider the same choice of polynomials $\P_1$ and $\P_3$ in \eqref{choiceP3} as well as we made in the step two case. This is due to the fact that $\P_3$ does not contain variables with  degree of homogeneity greater than two. Hence when we apply, for instance $X_1$, on $\P_3$ we get
\begin{equation}
    \left[ \sum_{j=1}^{n-m_1-m_2} \left( \sum_{k,i=1}^{m_1} \delta_{kij}^{1}x_ix_k + \sum_{i=1}^{m_2} \theta_{ij}^{1}y_i\right) \partial_{t_{j}} \right] \P_3 = 0.
\end{equation}

Moreover, with the same method we are able to treat the general case  of  a Carnot group of step $s$. In this case, the associated Lie algebra is
\begin{equation}
    \mathfrak{g} = \bigoplus_{i=1}^{s} \mathfrak{g}_{i} = \mathfrak{g}_1 \oplus \mathfrak{g}_2 \oplus  \mathfrak{g}_3 \oplus \bigoplus_{i=4}^{s} \mathfrak{g}_{i}.
\end{equation}
Since we have discussed the problem for a three step Carnot group, from \eqref{1}, we see that the term $T_j$ does not affect the derivatives of $\P_3$ related to the first layer $\mathfrak{g}_1$. Hence it is possible to apply the same procedure even for a general Carnot group.
\end{proof}
\begin{nota}
    In the proof of Theorem \ref{t2} is not only provided a function that realizes the thesis but a family indeed, depending on the parameters $b, p,q$. \\
    The explicit family of functions is, assuming to consider, for instance, the variables $x_1, x_2,\dots,x_{m_1}$ associated with the first stratum and $y$ is one variable associated with the second stratum:
    \begin{equation}\label{polynomialcounterexample}
    \begin{aligned}
        u_{b,p,q}(x_1,x_2,\dots, x_{m_1}, y)&= bx_2 + \frac{\alpha_{1}^{1}(p+q)}{2b(\alpha_{1}^{2} -\alpha_2^1)}x_1^3 + \frac{\alpha_2^1 p + \alpha_{1}^{2} q}{b(\alpha_{1}^{2} - \alpha_2^1)} x_1^2 x_2  +
          \frac{\alpha_{2}^{2}(p+q)}{2b(\alpha_{1}^{2} - \alpha_2^1)}x_1 x_2^2 \\
          & - \frac{q}{3b}x_2^3 - \frac{p+q}{b(\alpha_{1}^{2} -\alpha_2^1)} x_1 \left( y - \sum_{i=3}^{m_1} \left(\frac{\alpha_{i}^{1}x_1}{2} - \alpha_{i}^{2}x_2 \right)x_i \right)
        \end{aligned}
    \end{equation}
    with $b \in \mathbb{R}^{*}, (p,q) \in D.$
\end{nota}

\section{Nonexistence of an ACF formula in some Carnot groups} \label{non}
Given the method of building explicit counterexamples for the functional $\Phi$ shown in the proof of Theorem \ref{t2}, the goal of this section is to apply these results to prove the failure of the increasing monotonicity  of an Alt-Caffarelli-Friedman monotonicity type formula like \eqref{c4} for nontrivial Carnot groups.

\begin{theorem} \label{t3}
     For any Carnot group $\g$ of step $s$, with $s >1$, if the
     fundamental solution $\Gamma_{\mathbb{G}}$ associated with $\Delta_{\mathbb{G}}$ with pole at the origin is symmetric with respect all the variables associated with the first stratum, then there exists a continuous function $u$ such that $u$ is harmonic in $\{u>0\}$ as well as $u$ is harmonic in $\{u\leq 0\}^o$ such that $J_{u}^{\g}$  fails to be increasing in a neighborhood of $0$.
\end{theorem}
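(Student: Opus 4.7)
My plan is to take the explicit $\g$-harmonic polynomial $u=\P_1-\P_3$ furnished by the proof of Theorem~\ref{t2} (concretely the family $u_{b,p,q}$ of \eqref{polynomialcounterexample}), verify that $u$ is intrinsically odd under the reflection $\sigma$ of the first-stratum coordinates, and then exploit the hypothesis $\GammaG\circ\sigma=\GammaG$ to collapse $J_u^\g=\Phi_+\,\Phi_-$ into $(\Phi/2)^2$, so that the strict decrease of $\Phi$ established in Theorem~\ref{t2} transfers directly to $J_u^\g$.

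I would begin by fixing parameters $b\in\mathbb{R}^{*}$ and $(p,q)\in D$ as in Theorem~\ref{t2}, which already guarantees that $u$ is a continuous polynomial, $\DeltaG u=0$ (so $u$ is harmonic on $\{u>0\}$ and on $\{u\le 0\}^o$), and that the associated $\Phi$ is strictly decreasing on some right neighborhood $(0,R)$ of $0$. Let $\sigma:\g\to\g$ be the linear involution sending $x_j\mapsto -x_j$ for each $j=1,\dots,m_1$ and fixing every coordinate of $\g$-weight $\ge 2$. A term-by-term inspection of \eqref{polynomialcounterexample} shows that every monomial of $u$ has odd first-stratum ordinary degree: the cubic monomials $x_1^3,x_1^2x_2,x_1x_2^2,x_2^3,x_1^2x_i,x_1x_2x_i$ each contribute first-stratum degree $3$, the mixed monomial $x_1y$ contributes first-stratum degree $1$ (with $y$ fixed by $\sigma$), and the linear part $bx_2$ is of first-stratum degree $1$. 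Hence $u\circ\sigma=-u$, $\sigma$ swaps the sets $\{u>0\}$ and $\{u<0\}$, and $u^+\circ\sigma=u^-$ a.e.

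The central lemma of the plan is the identity $|\gradG u^+|^2\circ\sigma=|\gradG u^-|^2$. By Proposition~\ref{p1}, $X_j=\partial_{x_j}+\sum_{k:\,d_k\ge2}p_{j,k}(x)\,\partial_{x_k}$ with $p_{j,k}$ a $\g$-homogeneous polynomial of weight $d_k-1$. The decisive structural feature of $u$ is that no variable of $\g$-weight $\ge 3$ appears in it, so $\partial_{x_k}u\equiv 0$ whenever $d_k\ge 3$; only the second-stratum indices $d_k=2$ enter, and for those $p_{j,k}$ is a linear form in the first-stratum coordinates, so that $p_{j,k}\circ\sigma=-p_{j,k}$. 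A short chain-rule computation then yields $(X_j u)\circ\sigma=X_j u$, so $|\gradG u|^2$ is $\sigma$-invariant. Combining this with the a.e.\ formulas $\gradG u^\pm=\gradG u\cdot\mathbf{1}_{\{u\gtrless 0\}}$ and the fact that $\sigma$ swaps these indicator sets gives the claimed $|\gradG u^+|^2\circ\sigma=|\gradG u^-|^2$.

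To conclude, the hypothesis $\GammaG\circ\sigma=\GammaG$ makes the superlevel set $\BG{r}{0}$ $\sigma$-invariant, and $|\det D\sigma|=1$. The change of variable $M=\sigma N$ in the definition of $\Phi_+$ then gives $\Phi_+(r)=\Phi_-(r)$, and since $\Phi=\Phi_++\Phi_-$ a.e.\ one obtains $J_u^\g(r)=\Phi_+(r)\,\Phi_-(r)=\bigl(\Phi(r)/2\bigr)^{2}$. From the proof of Theorem~\ref{t1}, $\Phi(r)=a_0-2a_2r^2+a_4r^4$ with $a_0>0$ and $a_2>0$, so $\Phi$ is positive and strictly decreasing on $(0,R)$, whence $J_u^\g=\Phi^2/4$ is strictly decreasing there and, in particular, fails to be monotone increasing. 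The main obstacle is the identity $(X_j u)\circ\sigma=X_j u$ in arbitrary step: the correction terms one would normally expect from the non-automorphic character of $\sigma$ (coming from $p_{j,k}$ with $d_k\ge 3$) are precisely killed by the fact that $u$ carries no dependence on any variable of $\g$-weight $\ge 3$, a property built into the construction of $\P_1-\P_3$ in Theorem~\ref{t2}.
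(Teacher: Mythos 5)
Your proposal is correct and follows essentially the same route as the paper: take the family $u_{b,p,q}$ from Theorem \ref{t2}, use its oddness under the reflection $\mathcal{S}$ of the first-stratum variables together with the assumed symmetry of $\GammaG$ to show via a change of variables that the two factors of $J_u^{\g}$ coincide, and then deduce $J_u^{\g}=\tfrac14\Phi^2$ decreases because $\Phi$ does. Your explicit verification that $(X_j u)\circ\mathcal{S}=X_j u$ (using Proposition \ref{p1} and the fact that $u$ involves no variables of weight $\ge 3$) is a welcome spelling-out of an invariance the paper's change of variables uses implicitly.
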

\begin{proof} Let us denote, for brevity, 
\begin{align*}
    I_{u^{+}}^{\g}(r):= \frac{1}{r^2} \int_{B_{r}^{\mathbb{G}}} \lvert \nabla_{\mathbb{G}} u^+(M) \rvert^2 \Gamma_{\mathbb{G}}(M) \, dM = \int_{B_{r}^{\mathbb{G}} \cap \{ u >0\}} \lvert \nabla_{\mathbb{G}} u(M) \rvert^2 \Gamma_{\mathbb{G}}(M) \, dM  ,  \\
    I_{u^{-}}^{\g}(r):= \frac{1}{r^2} \int_{B_{r}^{\mathbb{G}}} \lvert \nabla_{\mathbb{G}} u^-(M) \rvert^2 \Gamma_{\mathbb{G}}(M) \, dM =   \int_{B_{r}^{\mathbb{G}} \cap \{ u < 0\}} \lvert \nabla_{\mathbb{G}} u(M) \rvert^2 \Gamma_{\mathbb{G}}(M) \, dM , 
\end{align*}
for which we have that $J_{u}^{\g}(r)= I_{u^{+}}^{\g}(r) \cdot I_{u^{-}}^{\g}(r)$. \\
The goal is to show the monotone decreasing behavior of $J_{u}^{\g}$:
\begin{equation}
    \frac{d}{dr}J_{u}^{\g}(r) = \biggl(\frac{d}{dr}I_{u^{+}}^{\g}(r) \biggl)I_{u^{-}}^{\g}(r) + I_{u^{+}}^{\g}(r) \biggl( \frac{d}{dr}I_{u^{-}}^{\g}(r)\biggl).
\end{equation}
 Apparently, the proof we need has to depend on the special function we have determined only. In fact, the harmonic polynomials we selected in Carnot groups, have a special behavior, in comparison with the companion Euclidean harmonic polynomials.

Since $I_{u^{+}}^{\g}$ and $I_{u^{-}}^{\g}$ are nonnegative, we reach the desired result if we prove that they are both monotone decreasing  or there exists a relationship with the polynomials whose existence we proved in the previous section. 

Indeed, fixing
  $u:=u_{b,p,q}$ provided by Theorem \ref{t2}, we have that $u$ is a difference of two homogeneous polynomials of degree one and three respectively and both intrinsicly harmonic. 

Moreover, recalling  \eqref{polynomialcounterexample}, we point out that
$$ u_{b,p,q}(-x_1,-x_2,\dots, -x_{m_1}, y)=-u_{b,p,q}(x_1,x_2,\dots, x_{m_1}, y).$$

 Hence,
$\mathcal{S}(\{u_{b,p,q}>0\})=\{u_{b,p,q}<0\}$, where $\mathcal{S}$ denotes the change of variables that moves $x_i$ in $-x_i$ for $i=1,\dots,m_1$ and leaves unchanged the other variables. 
Moreover, the hypothesis on the symmetry of $\GammaG$ assure also that $\GammaG(M)=\GammaG(\mathcal{S}(M))$ for every $M \in \g$.

Thus from the construction of $u_{b,p,q}$, as a consequence of the previous remark, recalling the change of variables $\mathcal{S}$, we obtain that
\begin{equation*}
    \begin{aligned}
        I_{u_{b,p,q}^{+}}^{\g}(r)&=\int_{B_{r}^{\mathbb{G}} \cap \{ u >0\}} \lvert \nabla_{\mathbb{G}} u_{b,p,q}(M) \rvert^2 \Gamma_{\mathbb{G}}(M) \, dM = \int_{\mathcal{S}\left(B_{r}^{\mathbb{G}} \cap \{ u > 0\}\right)} \lvert \nabla_{\mathbb{G}} u_{b,p,q}(\mathcal{S}(M)) \rvert^2 \Gamma_{\mathbb{G}}(\mathcal{S}(M)) \, dM  \\
         &=\int_{B_{r}^{\mathbb{G}} \cap \{ u < 0\}} \lvert \nabla_{\mathbb{G}} u_{b,p,q}(M) \rvert^2 \Gamma_{\mathbb{G}}(M) \, dM = I_{u_{b,p,q}^{-}}^{\g}(r)
    \end{aligned}
\end{equation*}
Therefore
$$J_{u_{b,p,q}}^{\g}(r)= I_{u_{b,p,q}^{+}}^{\g}(r) \cdot I_{u_{b,p,q}^{-}}^{\g}(r)=\frac{1}{4}(I_{u_{b,p,q}}^{\g})^2.$$
As a consequence, by the positivity of $I_{u_{b,p,q}}^{\g}$, the decreasing monotonic behavior of $J_{u_{b,p,q}}^{\g}$ descends from the decreasing monotonic behavior of $I_{u_{b,p,q}}^{\g},$ as we proved in  Theorem \ref{t1}.

\end{proof}

We are now in position to obtain the proof of Theorem \ref{t0} as a corollary of Theorem \ref{t3}. \\

\begin{proof}(of Theorem \ref{t0})
The special case of a Carnot group of step two allows us to apply Theorem \ref{t3}. In fact, in the step two case we can prove directly the symmetry of the fundamental solution with respect to variables of the first layer.
This property derives from the integral representation formula of the fundamental solution found in \cite{beals1996green} (Theorem $5.12.1$ in \cite{bonfiglioli2007stratified} in Carnot group settings). Moreover, there exists a isomorphism between every Carnot group of step two with one endowed with inner law governed by skew symmetric matrices (see for instance Proposition $3.5.1$ in \cite{bonfiglioli2007stratified}). This explicit isomorphism fixes the first $m_1$ variables and thus the symmetry of the fundamental solution descends.
In this way we can conclude applying Theorem \ref{t3}.
\end{proof}
$\,$
Concerning Carnot groups of step $s>2$, to our knowledge, neither an explicit representation formula of the fundamental solution is known, in general, nor much is known about the symmetry with respect to the variables of the first stratum. 
Nevertheless it is possible to provide another sufficient condition for the decreasing behavior of $J_{u}^{\g}$ exploiting the well known $\g$-symmetry of $\GammaG$ with respect to the origin, precisely $\Gamma_{\g} (P)=\Gamma_\g(P^{-1}),$ see for instance \cite{Folland_main}. 
Then, in order to state the result described in the following remark, we need to introduce the definition of an \textit{intrinsic odd} function, precisely a function is said to be \textit{intrinsic odd} if $u(P)=-u(P^{-1})$ for every $P \in \g$.
\begin{nota} \label{t4}
     For any Carnot group $\g$ of step $s$, with $s >1$, if it were possible to build a harmonic function $u=\P_1 - \P_3$ such that 
     \[ \ps{\gradG \P_1(P)}{\gradG \P_3(P)} > 0\:\: \text{ a.e. in $\BG{1}{0}$} 
\] 
which is \textit{intrinsic odd} as well, then $u$ would be harmonic in $\{u>0\}$ as well as $u$ is harmonic in $\{u\leq 0\}^o$ and $J_{u}^{\g}$ would fail to be increasing in a neighborhood of $0$.
    This can be seen as another sufficient condition to obtain a counterexample and its proof descends straightforwardly from the proof of Theorem \ref{t3} considering, instead of $\mathcal{S}$, the change of variables that maps $P$ into $P^{-1}$.
\end{nota}

\section{Final remarks}\label{AppEngel}
In this last part, we provide an explicit application of the procedure showed in the case in the first Engel group $\e$, which is a three step Carnot group.

    The first Engel group can be introduced as $\mathbb{R}^{4}$ endowed with the inner law that for every $P,M \in \e$ associate $P \circ M$ in the following way
    \begin{align*}
     P \circ M &= (x_1,x_2,y,t) \circ (x'_1, x'_2,y',t') \\ &= (x_1+x'_1, \, x_2 + x'_2, \,y+ y'+ x_1x'_2, \,t+ t' + x_1y'+ \frac{1}{2}x_1^{2} x'_2). 
     \end{align*}
    A basis of left invariant vector fields is given by
    \[X_1=\partial_{x_1}, \; X_2= \partial_{x_2} + x_1\partial_{y}+ \frac{1}{2}x_1^2 \partial_{t}, \; Y= \partial_{y}+ x_1 \partial_{t}, \; T=\partial_{t}.\]
    The fields $X_1$ and $X_2$ generate the horizontal layer $\mathfrak{g}_1$ of $\e$. The commutator between the vector fields is 
    \[ [X_1, X_2 ]= Y, \quad [X_1, Y]=T,\]
    otherwise is $0.$
    Thus we have
    \begin{equation*}
        \mathfrak{g} = \mathfrak{g}_1 \oplus \mathfrak{g}_2 \oplus \mathfrak{g}_3,
    \end{equation*}
    where 
    \begin{equation*}
         \mathfrak{g}_1 = \Span \{X_1, X_2 \}, \quad \mathfrak{g}_2 = \Span \{Y \}, \quad \mathfrak{g}_3 = \Span \{T \}.
    \end{equation*}
    The sub-Laplacian on $\e$ is defined as
    \begin{align*}  
    \DeltaE  &= (X_1^{2}+X_2^{2}) = \big(\partial_{x_1x_1}^{2} + (\partial_{x_2}+x_1 \partial_{y} + \frac{x_1^2}{2} \partial_{t})^{2} \big) \\
    & = \big( \partial_{x_1x_1}^{2} + \partial_{x_2 x_2}^{2} + x_1^2 \partial_{yy}^{2} + \frac{x_1^4}{4} \partial_{tt}^{2} +  2x_1 \partial_{x_2y}^{2} + x_1^2 \partial_{x_2t}^{2} + x_1^3 \partial_{yt}^{2}\big).
    \end{align*}
Now, applying the procedure of Section \ref{proof} with $p=0, q= \frac{1}{2}, b=1$ by \eqref{coeff} we obtain 
$$ u = \P_1 - \P_3 = x_2 - \biggl(-\frac{1}{2}x_1^2 x_2 + \frac{1}{6}x_2^3 + \frac{1}{2}x_1 y\biggl),$$
for which 
\begin{align*}
\ps{\gradE \P_1}{\gradE \P_3}= \ps{(0,1)}{(\frac{1}{2}y - x_1 x_2, \frac{1}{2}x_2^2)}=\frac{1}{2}x_2^2, \quad \DeltaE u = 0.
\end{align*}
Hence, invoking Theorem \ref{t2} 
we can conclude that $u$ provides a counterexample for the increasing monotone behavior of \eqref{t2}.

The structure of the fundamental solution of the sublaplacian to $\Delta_{\mathbb{E}}$  in the Engel group, to our knowledge, is not explicit. Hence we cannot conclude applying the Theorem \ref{t3}. In fact,  only if  $\Gamma_{\mathbb{E}}$  enjoyed the symmetry requested by Theorem \ref{t3}, we would be in position to produce a counterexample even to the increasing monotonicity of $J_{u}^{\mathbb{E}}$.

\begin{nota}
One could also try to look for harmonic functions being \textit{intrinsic odd} hoping to apply Remark \ref{t4}.
For a general Carnot group is not so simple because the expression of $P^{-1}$ depends on the inner law of the group that could be very complicated.
\end{nota}
In the first Engel group $\mathbb{E}$, for instance, the expression of the inverse of $(x_1,x_2,y,t)$ is given by $(-x_1,-x_2,-y-x_1x_2, -t +x_1y+ \frac{3}{2}x_1^2x_2)$ and it is not possible to build a \textit{intrinsic odd} polynomial of degree three.
However, if we  consider polynomials of degree five it is possible to do so considering $$\P_5 = x_1y^2 - 2yx_1^2x_2 + 2tx_1x_2+\frac{x_1^3x_2^2}{2}+ x_1^2x_2^3$$ then $u= \P_1-\P_5$ with $\P_1=x_2$ is \textit{intrinsic odd} and $\ps{\gradE\P_1}{\gradE \P_5} = 3x_2^2x_1^2>0$ almost everywhere in $B_1^{\e}(0)$ but is not harmonic.  

\section{Data Availability }
Data Availability Data sharing not applicable to this article as no datasets were generated
or analysed during the current study.

\section{Declaration}
Conflict of interest. On behalf of all authors, the corresponding author states that there is
no conflict of interest.

\bibliographystyle{abbrv}
\bibliography{biblio}
\end{document}